\newtheorem{theorem}{Theorem}
\newtheorem{lemma}{Lemma}
\newtheorem{proposition}{Proposition}
\newtheorem{remark}{Remark}
\newcommand{\vol}[1]{\left| #1 \right|}
\newcommand{\abs}[1]{\left| #1 \right|}
\newcommand{\sprod}[2]{\left\langle #1, #2 \right\rangle}
\renewcommand{\[}{\left[}
\renewcommand{\(}{\left(}
\renewcommand{\]}{\right]}
\renewcommand{\)}{\right)}
\newcommand{\conv}{\mathop{\textup{conv}}}
\renewcommand{\le}{\leqslant}
\renewcommand{\ge}{\geqslant}
\newcommand{\lng}{\langle}
\newcommand{\rng}{\rangle}
\newcommand{\al}{\alpha}
\newcommand{\de}{\delta}
\newcommand{\De}{\Delta}
\newcommand{\ka}{\kappa}
\newcommand{\si}{\sigma}
\newcommand{\te}{\theta}
\newcommand{\ze}{\zeta}
\newcommand{\R}{\mathbb R}              
\newcommand{\Sp}{{{\mathbb S}^{n-1}}}   
\newcommand{\VP}{\mathcal P}            
\newcommand{\F}{\mathcal F}
\newcommand{\FF}{\mathbb F}
\begin{document}

\title
{Local minimality of the volume-product at the simplex
\footnotetext{2000 Mathematics Subject Classification 52A40.}
\footnotetext{Key words and phrases: convex bodies, convex polytopes, polar
bodies, Santal\'o point, volume product.}}
\author{Jaegil Kim\thanks{Supported in part by U.S.~National Science Foundation grant DMS-0652684.}
 and Shlomo Reisner\thanks{Supported in part by the France-Israel Research Network Program
in Mathematics contract \#3-4301}
 }

\date{}
\maketitle

\begin{abstract}
It is proved that the simplex is a strict local minimum for the
volume product, $\mathcal{P}(K)=\min_{z\in {\rm int}(K)}
\vol{K}\vol{K^z}$, in the Banach-Mazur space of n-dimensional
(classes of ) convex bodies. Linear local stability in the
neighborhood of the simplex is proved as well. The proof consists of an extension to the non-symmetric setting of methods that were recently introduced by Nazarov, Petrov, Ryabogin
and Zvavitch, as well as proving results of independent interest, concerning stability of
square order of volumes of polars of non-symmetric convex bodies.
\end{abstract}

\section{Introduction and Preliminaries}
A \emph{body} is a compact set which is the closure of its
interior and, in particular, a \emph{convex body} in $\R^n$ is a
compact convex set with nonempty interior. If $K$ is a convex body
in $\R^n$ and $z$ is an interior point of $K$, then the
\emph{polar body} $K^z$ of $K$ with center of polarity $z$ is
defined by
$$
K^z = \{y\in\R^n : \sprod{y}{x-z}\le 1 \mbox{\ for all\ } x\in K\}
$$
where $\sprod{\cdot}{\cdot}$ is the canonical scalar product in
$\R^n$. In particular, if the center of polarity is taken to be the
origin, we denote by $K^\circ$ the polar body of $K$ and we clearly
have $K^z=(K-z)^\circ$.

If $A$ is a measurable set in $\R^n$ and $k$ is the minimal
dimension of a flat containing $A$, we denote by $\vol{A}$ the
$k$-dimensional volume (Lebesgue measure) of $A$. There should be no confusion
of the last notation with the notation for the Euclidean norm of a vector $x\in \R^n$
which is $|x|=\sqrt{\lng x,x\rng}$. A well known
result of Santal\'o \cite{S} states that in every convex body $K$ in
$\R^n$ there exists a unique point $s(K)$, called the
\emph{Santal\'o point} of $K$, such that
$$
\vol{K^{s(K)}} = \min_{z\in {\rm int}(K)} \vol{K^z}.
$$

The \emph{volume product} of $K$ is defined by
$$
\VP(K) = \inf \{\vol{K}\vol{K^z} : z\in {\rm int}(K)\}.
$$
A well known conjecture, called sometimes Mahler's conjecture (\cite{Ma1,Ma2}),
states that, for every convex body $K$ in $\R^n$,
\begin{equation}\label{eq:inverse_santalo}
\VP(K) \ge \VP(S)=\frac{(n+1)^{n+1}}{(n!)^2}
\end{equation}
where $S$ is an $n$-dimensional simplex. It is also conjectured that
equality in (\ref{eq:inverse_santalo}) is attained only if $K$ is a
simplex. The inequality (\ref{eq:inverse_santalo}) for $n=2$ was
proved by Mahler \cite{Ma1} with the case of equality proved by
Meyer \cite{Me91}. Other cases, like e.g. bodies of revolution, were
treated in \cite{MR98}. Several special cases in the centrally
symmetric setting can be found in \cite{SR,R86,GMR,Me86,R87}. Not
many special cases in which (\ref{eq:inverse_santalo}) is true seem
to be known, one such is proved in \cite{MR06}: all n dimensional
polytopes with at most n + 3 vertices (or facets). For more
information on Mahler's conjecture, see an expository article
\cite{Tao} by Tao.

The (non-exact) reverse Santal\'o inequality of Bourgain and Milman
\cite{BM} is
$$\VP(K) \ge c^n\VP(B^n_2)$$
where $c$ is a positive constant and $B^n_2$ is the Euclidean ball
(or any ellipsoid) in $\R^n$. Kuperberg \cite{Ku} reproved this
result with an improved constant. This should be compared with the
Blaschke-Santal\'o inequality
$$
\VP(K) \le \VP(B^n_2)
$$
with equality only for ellipsoids (\cite{S}, \cite{P}, see \cite{MP}
or also \cite{MR06} for a simple proof of both the inequality and
the case of equality)

The volume product is affinely invariant, that is,
$\VP(A(K))=\VP(K)$ for every affine isomorphism $A: \R^n \rightarrow
\R^n$. Thus, in order to deal with local behavior of the
volume product we need the following affine-invariant
(the {\em Banach-Mazur\/}) distance between
convex bodies:
$$
d_{BM}(K,L) = \inf \Big\{c : A(L)\subset B(K)\subset cA(L),
\text{for affine isomorphisms } A,B \text{ on}\,\R^n\Big\}
$$
If both $K$ and $L$ are symmetric convex bodies, this is just the
classical Banach-Mazur distance.

In a recent paper \cite{NPRZ}, the following result, connected to
the symmetric form of Mahler's conjecture, is proved:

\noindent\textbf{Theorem} \cite{NPRZ} {\it Let $K$ be an
origin-symmetric convex body in $\R^n$. Then
$$
\VP(K)\ge \VP(B_\infty^n),
$$
provided that $d_{BM}(K,B_\infty^n)\le 1+\de$, and $\de=\de(n)>0$ is
small enough (where $B_{\infty}^n$ is the $\ell_{\infty}^n$ unit ball).
 Moreover, the equality holds only if
$d_{BM}(K,B_\infty^n)=1$, i.e., if $K$ is a parallelopiped.}

In this paper we prove the analogous result for the $n$-dimensional simplex.

\begin{theorem}\label{th:main}
There exists $\de(n)>0$ such that the following holds:
Let $S$ be a simplex in $\R^n$ and $K$ a convex body in $\R^n$ with
$d_{BM}(K,S)=1+\de$ for  $0<\de<\de(n)$. Then
$$
\VP(K) \ge \VP(S) + C\de,
$$
where $C=C(n)$ is a positive constant.
\end{theorem}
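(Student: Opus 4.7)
The plan is to extend the approach of Nazarov--Petrov--Ryabogin--Zvavitch (NPRZ) from the centrally symmetric setting to the non-symmetric simplex. By the affine invariance of both $\VP$ and $d_{BM}$, I would first fix a representative of $S$ as the regular simplex with centroid --- hence Santal\'o point --- at the origin, and choose affine representatives of $K$ so that
$$
S \subseteq K \subseteq (1+\de)S,
$$
equivalently $0 \le h_K - h_S \le \de \, h_S$ on $\Sp$. The hypothesis $d_{BM}(K,S) = 1+\de$ then ensures that no further affine adjustment can strictly tighten this sandwich.

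The next step is to establish a Santal\'o-point stability estimate, $|s(K)| \le C_1(n)\, \de$ --- one of the results of independent interest highlighted in the abstract. The Santal\'o point is characterized by the vector equation $\int_{\Sp}(h_K(u) - \lng s(K), u\rng)^{-n-1}\, u\, d\sigma(u) = 0$; at $K = S$ this is solved by $0$, and the Hessian of the strictly convex function $z \mapsto |S^z|$ at its minimizer is uniformly positive definite. The implicit function theorem applied to the perturbation $\|h_K - h_S\|_\infty = O(\de)$ then yields the claim. Combined with the second-order expansion $|K^0| - |K^{s(K)}| = \tfrac{1}{2}\, s(K)^T H_K s(K) + O(|s(K)|^3) = O(\de^2)$, the problem reduces to proving the linear lower bound
$$
|K|\cdot|K^0| \ge \VP(S) + C_2(n)\, \de,
$$
since then $|K|\cdot|K^{s(K)}| \ge |K|\cdot|K^0| - O(\de^2) \ge \VP(S) + (C_2/2)\,\de$ for $\de$ small enough.

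The heart of the proof is this last inequality, where the NPRZ techniques must be adapted to the non-symmetric geometry of the simplex. The condition $d_{BM}(K,S)=1+\de>1$ forbids $K$ from being an affine image of $S$ and forces a combinatorial deviation from the simplex structure: $K$ must carry either an ``extra vertex'' protruding past a facet of $S$ or, by polar duality, an ``extra facet'' cutting off a vertex of $(1+\de)S$. A direct computation for a model deformation (e.g.\ adding a new vertex of height $\epsilon$ near an existing vertex of a planar triangle) confirms that such a deviation contributes a linear-in-$\epsilon$ positive increment to $|K|\cdot|K^0| - \VP(S)$. The task is then to quantify this uniformly: using the ``square-order'' stability estimates for polar volumes of non-symmetric bodies (the second stability result of independent interest announced in the abstract), show that every admissible $K$ carries such combinatorial deviations whose aggregate contribution to $|K|\cdot|K^0|-\VP(S)$ is at least $C_2\,\de$.

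\textbf{Main obstacle.} The principal difficulty is this uniform linear lower bound. In the centrally symmetric NPRZ setting, symmetry restricts perturbations to an even subspace and the Santal\'o point sits at the center for free; here both simplifications are absent. Moreover, $\VP$ is constant along the whole affine orbit through $S$, so one must quotient out this orbit carefully and show that the $L^\infty$-rigidity of the $d_{BM}$-constraint, translated into the perturbation $h_K - h_S$, forces a lower bound of order $\de$ on the ``transverse'' $L^1$-size of the perturbation --- rather than the merely $\de^2$ bound one would get naively.
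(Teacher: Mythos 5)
Your overall architecture matches the paper's: normalize to sandwich $K$ near a fixed simplex, prove linear stability of the Santal\'o point and square-order stability of $|K^z|$ near $z=s(K)$ so that one may replace $|K^{s(K)}|$ by $|K^\circ|$ at an $O(\de^2)$ cost, and then reduce everything to a linear lower bound $|K|\,|K^\circ|\ge \VP(S)+C\de$. But this last inequality is the content of the theorem, and your treatment of it is a heuristic: a model two-dimensional deformation, plus an acknowledgment (your ``Main obstacle'' paragraph) that you do not know how to make the combinatorial ``extra vertex or extra facet'' picture uniform. That is precisely the step where the paper does its actual work. The paper constructs, for each face $F$ of $\De_n$, a tangent point $x_F\in\partial K$ in the affine flat $tc_F+F^\bot$ and the corresponding dual tangent point $x_F^*\in\partial K^\circ$, and assembles them over flags of faces into the (possibly non-convex) polytopes $P\subset K$, $P'\subset K^\circ$ together with comparison polytopes $Q$, $Q'$. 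Lemma~\ref{lem:polytope_Q} gives $|Q|\,|Q'|\ge|\De_n|\,|\De_n^\circ|$, Lemma~\ref{lem:vol_difference} shows $|\,|P|-|Q|\,|$ and $|\,|P'|-|Q'|\,|$ are $O(\de^2)$, and the crucial new Lemma~\ref{lem:deta_difference} produces the linear gap: either $|K|\ge|P|+c'\de$ or $|K^\circ|\ge|P'|+c'\de$. This last lemma is what your sketch is missing, and it is exactly what the authors describe as the part that ``had to be worked out anew'' for the non-symmetric case; without it your reduction terminates at an unproved inequality.

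Two secondary points. First, your normalization $S\subseteq K\subseteq(1+\de)S$ with the remark that $d_{BM}=1+\de$ ``ensures no further affine adjustment can strictly tighten this sandwich'' is not the normalization the proof actually needs: the paper uses Klee's minimal-volume circumscribed simplex (Lemma~\ref{lem:distance}) to arrange $(1-C\de)\De_n\subset K\subset\De_n$ \emph{and} that every facet-centroid of $\De_n$ lies in $K$; the latter is essential because it forces $x_F=y_F=c_F$ on facets and is used throughout Lemma~\ref{lem:deta_difference}. Second, your implicit-function-theorem route for $|s(K)-s(S)|\le C\de$ is a reasonable alternative to the paper's explicit Taylor-expansion argument in Proposition~\ref{prop:santalo_map}; that part of your plan is sound, just less quantitative than what the paper gives.
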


There are some profound differences between the symmetric and
the non-symmetric cases. The most important one is, perhaps, the changed location
of the Santal\'o point when the body changes even slightly. Section~2 of this paper
deals with this change (it is shown that it obeys linear stability) and its implication
on the volume of the polar body (square-order stability). We believe that the results
of Section~2 have importance for their own sake.

Section~3 presents the necessary changes to the methods of \cite{NPRZ},
Among these we mention, in particular, the proof of Lemma~\ref{lem:deta_difference} here, which is the analogue of Section~5 of \cite{NPRZ}. This Lemma required a new proof that will work also in the non-symmetric setting. We provide here a coordinate-free proof that has a potential of being useful in other settings as well.

>From now on we fix a regular simplex $\De_n$ to be the convex hull of
$n+1$ vertices $v_0,\ldots,v_n$ where $v_0,\ldots,v_n$ are points on
$\Sp$ satisfying
$$
\sprod{v_i}{v_j} =\left\{
  \begin{array}{ll}
    1, & \hbox{if $i=j$} \\
    -\frac{1}{n}, & \hbox{otherwise.}
  \end{array}
\right.
$$
Note that $\De_n^{\circ}=-n\De_n$.
Most of the constants throughout the proofs depend on the dimension
n. They do not depend on the body K. We use the same letter (usually
$C$, $c$ etc.) to denote different constants in different paragraphs
or even in different lines.

\noindent{\bf Acknowledgment}. It is with great pleasure that we
thank Karoly Boroczky, Mathieu Meyer, Dmitry Ryabogin and Artem Zvavitch for very
helpful advice during the preparation of this paper.

\section{Continuity of the Santal\'o map}

The following volume formula is known (using polar coordinates).
For every interior point $z$ of $K$,
$$
\vol{K^z} = \frac{1}{n}\int_\Sp
(h_K(\te)-\sprod{z}{\te})^{-n}d\si(\te).
$$
where $\si$ is the spherical Lebesgue measure and $h_K$ is the
support function of $K$. By the minimum property of $|K^z|$ at the
Santal\'o point $s(K)$, it turns out that $z=s(K)$ is a
\emph{unique} point satisfying the condition (see \cite{Sc})
\begin{equation}\label{eq:santalo}
\int_\Sp (h_K(\te)-\sprod{z}{\te})^{-n-1}\te d\si(\te) =0.
\end{equation}
This is equivalent to the fact that the centroid of $K^z$ is the origin.

Denote by $\mathcal{K}^n$ the space of convex bodies in $\R^n$
endowed with the Hausdorff metric $d_H$. The space
$(\mathcal{K}^n,d_H)$ is isometrically embedded in the space
$C(\Sp)$ of continuous functions on the sphere $\Sp$ by the isometry
$K \mapsto h_K$, that is, $d_H(K,L)=\|h_K-h_L\|_\infty$ for every
$K,L$ in $\mathcal{K}^n$.

\begin{proposition}\label{prop:santalo_map}
The Santal\'o map $s:(\mathcal{K}^n,d_H)\rightarrow \R^n$ is
continuous. Furthermore, for every convex body $K_0$, there exist positive
constants $C=C(K_0)$ and $\de=\de(K_0)$ such that 
$$
d_H(K,K_0)\le \de \quad\Rightarrow\quad |s(K)-s(K_0)| \le
C\,d_H(K,K_0)\,.
$$

\end{proposition}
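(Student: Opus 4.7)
The plan is to apply a quantitative implicit function theorem to the characterizing identity (\ref{eq:santalo}). Define
$$\Phi(K,z)=\int_\Sp (h_K(\te)-\sprod{z}{\te})^{-n-1}\te\,d\si(\te)\in\R^n$$
for $z\in{\rm int}(K)$; by (\ref{eq:santalo}) and uniqueness, $z=s(K)$ is the unique zero of $\Phi(K,\cdot)$ on ${\rm int}(K)$. Note also that $\Phi(K,\cdot)=\nabla_z\vol{K^z}$, directly from $\vol{K^z}=\frac{1}{n}\int_\Sp(h_K(\te)-\sprod{z}{\te})^{-n}d\si(\te)$. The local Lipschitz bound on $s$ will follow by estimating $\Phi(K,z_0)-\Phi(K_0,z_0)$ linearly in $d_H(K,K_0)$ (where $z_0=s(K_0)$), and invoking a uniform positive lower bound on the Jacobian $D_z\Phi$ near $(K_0,z_0)$.

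Fix $K_0$ and let $z_0=s(K_0)$; choose $r>0$ so that the Euclidean ball $B(z_0,2r)\subset K_0$. For $d_H(K,K_0)\le r/2$ and $z\in B(z_0,r/2)$, one checks
$$h_K(\te)-\sprod{z}{\te}\ge h_{K_0}(\te)-\sprod{z_0}{\te}-d_H(K,K_0)-|z-z_0|\ge 2r-r/2-r/2=r$$
uniformly in $\te\in\Sp$, while $h_K(\te)-\sprod{z}{\te}\le R(K_0)$ is also uniformly bounded above. Differentiating under the integral,
$$D_z\Phi(K,z)=(n+1)\int_\Sp (h_K(\te)-\sprod{z}{\te})^{-n-2}\te\te^T\,d\si(\te),$$
which is the Hessian of the convex function $z\mapsto\vol{K^z}$. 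It is symmetric positive definite: the integrand is a rank-one projection multiplied by a strictly positive weight, and $\te$ ranges over the full sphere. The two-sided bound on $h_K-\sprod{z}{\te}$ forces the smallest eigenvalue to be bounded below by some $\mu=\mu(K_0)>0$ uniformly on the neighborhood. Meanwhile, by the mean value theorem applied to $t\mapsto t^{-n-1}$ on $[r,R]$,
$$|\Phi(K,z_0)|=|\Phi(K,z_0)-\Phi(K_0,z_0)|\le C_1(K_0)\,d_H(K,K_0).$$

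Combining these ingredients, the mean-value identity
$$0=\Phi(K,s(K))=\Phi(K,z_0)+\Big(\int_0^1 D_z\Phi(K,z_0+t(s(K)-z_0))\,dt\Big)(s(K)-z_0)$$
holds whenever the segment from $z_0$ to $s(K)$ lies in $B(z_0,r/2)$. There the bracketed matrix $M$ inherits $\|M^{-1}\|\le\mu^{-1}$, whence
$$|s(K)-z_0|\le\mu^{-1}|\Phi(K,z_0)|\le \mu^{-1}C_1\,d_H(K,K_0).$$
The main obstacle is the \emph{a priori} containment $s(K)\in B(z_0,r/2)$ needed to apply the uniform Jacobian bound. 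I would establish this first by a qualitative compactness argument: if $K_m\to K_0$ in $d_H$, the sequence $(s(K_m))$ is bounded (eventually contained in a common compact neighborhood of $K_0$), and by dominated convergence any cluster point is a zero of $\Phi(K_0,\cdot)$, hence equals $z_0$ by uniqueness. Thus $s(K_m)\to z_0$, and $\de(K_0)$ can be shrunk to force $|s(K)-z_0|\le r/2$ as soon as $d_H(K,K_0)\le\de$. This preliminary step, bridging qualitative continuity and the quantitative linear estimate, is the only subtle point; everything else is a routine differentiation under the integral combined with the rank-one positivity of the Jacobian.
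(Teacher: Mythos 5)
Your proposal is correct, and its core ingredients match the paper's: the defining identity (\ref{eq:santalo}) has a uniformly positive-definite $z$-derivative (the Hessian of $z\mapsto|K^z|$, a sphere integral of the rank-one matrices $\te\te^T$ with positive weights bounded below via ${\rm diam}(K_0)$), and the identity depends Lipschitz-continuously on $K$ through $h_K$; both arguments also need the preliminary qualitative continuity to confine $s(K)$ near $s(K_0)$ before the linear estimate can close, which you handle by exactly the compactness/dominated-convergence/uniqueness argument the paper invokes. Where you genuinely differ is in the packaging. The paper performs a single Taylor expansion of $(a-x+y)^{-n-1}$ about $a(\te)=h_{K_0}(\te)-\sprod{s(K_0)}{\te}$, perturbing in the body and the point simultaneously, and then isolates four terms: the zeroth-order term vanishes, the $x$-term supplies the coercive lower bound $C_1|s(K)-s(K_0)|$ (which is exactly your Jacobian lower bound evaluated at the base point), the $y$-term is controlled by $d_H(K,K_0)$, and the explicit quadratic remainder is absorbed once $\de$ is small. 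You instead split the perturbation into two sequential steps: vary $K$ with $z=z_0$ fixed to get $|\Phi(K,z_0)|\le C_1 d_H(K,K_0)$, then vary $z$ with $K$ fixed via the exact mean-value identity. This removes the explicit quadratic remainder entirely, at the modest cost of requiring the eigenvalue lower bound $\mu$ to hold uniformly along the whole segment $[z_0,s(K)]$ and for the perturbed body $K$, rather than only at the base point $(K_0,z_0)$ as in the paper; your two-sided uniform bounds on $h_K(\te)-\sprod{z}{\te}$ supply this. The net result is the same estimate, arrived at by a slightly cleaner implicit-function-theorem framing.
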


\begin{proof}
The continuity of $s(K)$ is proved using a standard argument (the
dominated convergence theorem and the uniqueness of $s(K)$ in
(\ref{eq:santalo})).

For the second part, fix a convex body $K_0$ and let $K$ be any
convex body which is close to $K_0$ in the Hausdorff metric. Since
$s(K_0)$ is in the interior of $K_0$, there is a $r_0>0$ such that
the ball $B(s(K_0),r_0)$ with center $s(K_0)$ and radius $r_0$ is
contained in $K_0$. Then, since
$$
K_0^{s(K_0)} = \( K_0 -s(K_0)\)^\circ \subset B(0,r_0)^\circ =
B(0,1/r_0),
$$
we have, for every $\te\in \Sp$,
$$
h_{K_0}(\te) - \sprod{s(K_0)}{\te} = \|\te\|_{K_0^{s(K_0)}} \ge r_0.
$$
Define three functions $a,x,y$ on $\Sp$ by
\begin{eqnarray*}
a(\te) &=& h_{K_0}(\te) - \sprod{s(K_0)}{\te}, \\
x(\te) &=& \sprod{s(K)}{\te} - \sprod{s(K_0)}{\te} \qquad\text{and}\\
y(\te) &=& h_K(\te) - h_{K_0}(\te).
\end{eqnarray*}
Note that, for every $\te \in \Sp$,
$$
|a(\te)| \ge r_0,\quad |x(\te)| \le |s(K)-s(K_0)|, \quad |y(\te)|
\le d_H(K,K_0)
$$
and $h_K(\te)-\sprod{s(K)}{\te} = a(\te) - x(\te) + y(\te)$. By the
Talyor formula, we can write as
\begin{eqnarray*}
(a-x+y)^{-n-1} &=& a^{-n-1}\Big[1-\frac{x-y}{a}\Big]^{-n-1}\\
&=& a^{-n-1}\Big[1+(n+1)\frac{x-y}{a} + f\(\frac{x-y}{a}\)\Big]
\end{eqnarray*}
where $f(t):=(1-t)^{-n-1}-1-(n+1)t$ is $O(t^2)$ for small $t$.
Then (\ref{eq:santalo}) implies
\begin{eqnarray*}
0 &=& \int_\Sp (h_K(\te)-\sprod{s(K)}{\te})^{-n-1}\te d\si(\te)
= \int_\Sp (a-x+y)^{-n-1}\te d\si(\te)\\
&=& \int_\Sp \Big( a^{-n-1} +\frac{(n+1)(x-y)}{a^{n+2}} +
\frac{f\(\frac{x-y}{a}\)}{a^{n+1}} \Big)
\,\te d\si(\te) \\
&=:& \quad (1) + (n+1)\Big[(2) - (3)\Big] + (4)
\end{eqnarray*}
where $(1),(2),(3)$ and $(4)$ are:

\begin{enumerate}
\item[(1)]
\begin{eqnarray*}
\int_\Sp a(\te)^{-n-1}\,\te d\si(\te) \quad=\, \int_\Sp
(h_{K_0}(\te)-\sprod{s(K_0)}{\te})^{-n-1}\te d\si(\te) \,=\,0.
\end{eqnarray*}

\item[(2)]
\begin{eqnarray*}
\abs{\int_\Sp \frac{x(\te)}{a(\te)^{n+2}}\,\te d\si(\te)} &=&
\abs{\int_\Sp \frac{\sprod{s(K)-s(K_0)}{\te}}
{\[h_{K_0}(\te)-\sprod{s(K_0)}{\te}\]^{n+2}}\,\te d\si(\te)} \\
&\ge& \sprod{\ze}{\int_\Sp \frac{\sprod{s(K)-s(K_0)}{\te}}
{\[h_{K_0}(\te)-\sprod{s(K_0)}{\te}\]^{n+2}}\,\te d\si(\te)}\\
&=& |s(K)-s(K_0)| \int_\Sp \frac{|\sprod{\ze}{\te}|^2 d\si(\te)}
{\[h_{K_0}(\te)-\sprod{s(K_0)}{\te}\]^{n+2}}\\
&\ge& |s(K)-s(K_0)| \int_\Sp \frac{|\sprod{\ze}{\te}|^2 d\si(\te)}
{|{\rm diam}(K_0)|^{n+2}}\\
&=& C_1 |s(K)-s(K_0)|,
\end{eqnarray*}
where $\ze = \frac{s(K)-s(K_0)}{|s(K)-s(K_0)|} \in\Sp$ and $C_1 =
\vol{B_2^n} |{\rm diam}(K_0)|^{-n-2}$.

\item[(3)]
$$
\abs{\int_\Sp \frac{y(\te)}{a(\te)^{n+2}}\,\te d\si(\te)} \,\,\le\,
\int_\Sp \frac{|y(\te)|}{|a(\te)|^{n+2}}\,|\te| d\si(\te) \,\le\,
\frac{1}{r_0^{n+2}}\, d_H(K,K_0).
$$

\item[(4)]
\begin{eqnarray*}
\abs{\int_\Sp \frac{f\(\frac{x-y}{a}\)}{a^{n+1}} \,\te\, d\si(\te)}
&\le& \int_\Sp \frac{c \abs{\frac{x(\te)-y(\te)}{a(\te)}}^2}{|a(\te)|^{n+1}} |\te| d\si(\te)\\
&\le& 2c \int_\Sp \frac{|x(\te)|^2 + |y(\te)|^2}{|a(\te)|^{n+3}} d\si(\te)\\
&\le& 2c \int_\Sp \frac{|s(K)-s(K_0)|^2 + d_H(K,K_0)^2}{r_0^{n+3}} d\si(\te)\\
&=& C_2\Big(|s(K)-s(K_0)|^2 + d_H(K,K_0)^2\Big),
\end{eqnarray*}
where $C$ is an absolute constant such that $|f(t)|\le C|t|^2$ for
$t$ near $0$ and $C_2=2cn\vol{B_2^n}r_0^{-n-3}$.
\end{enumerate}

Finally we have
\begin{eqnarray*}
&& C_1|s(K)-s(K_0)| \quad\le\quad |(2)| \quad=\quad \abs{(3)-\frac{1}{n+1}\Big((1)+(4)\Big)}\\
&& \quad\le\quad \frac{1}{r_0^{n+2}}\, d_H(K,K_0) +
\frac{C_2}{n+1}\Big(|s(K)-s(K_0)|^2+d_H(K,K_0)^2\Big).
\end{eqnarray*}
By continuity of $s(K)$ (and, in fact, local uniform continuity at $K_0$), $|s(K)-s(K_0)|\rightarrow 0$
whenever $d_H(K,K_0)\rightarrow 0$. Thus the two quadratic terms in the
inequality above can be ignored whenever $\de$ is small enough.
Therefore $$|s(K)-s(K_0)| \le C\, d_H(K,K_0)$$ where $C$ is a
constant
greater than $|{\rm diam}(K_0)|^{n+2}\vol{B_2^n}^{-1} r_0^{-n-2}$. \\
\end{proof}

\begin{proposition}\label{prop:santalo_vol}
Let $K$ be a convex body in $\R^n$. If $z\in \mbox{int}(K)$ is close enough to $s(K)$ then
$$|K^z|\leq|K^{s(K)}|\left(1+\frac{c}{r_0^2}|z-s(K)|^2\right)\,,$$
where $r_0>0$ is such that $B(s(K),r_0)\subset K$ and $c$ is a constant independent of $K$.
\end{proposition}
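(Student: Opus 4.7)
The plan is to expand $|K^z|$ to second order around $z=s(K)$ and show that the first-order term vanishes. Set $s:=s(K)$, $w:=z-s$, and $a(\te):=h_K(\te)-\sprod{s}{\te}$. The polar coordinate formula from the start of the section gives
$$|K^z| = \frac{1}{n}\int_\Sp \(a(\te)-\sprod{w}{\te}\)^{-n}d\si(\te).$$
Since $B(s,r_0)\subset K$ implies $a(\te)\ge r_0$ on $\Sp$, for $|w|\le r_0/2$ one has $|\sprod{w}{\te}/a(\te)|\le 1/2$ uniformly in $\te$, and the elementary Taylor inequality $(1-t)^{-n}\le 1+nt+C_n t^2$ (with $C_n$ depending only on $n$) is applicable.

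Applying this inequality under the integral produces three terms. The zeroth-order term is exactly $|K^{s(K)}|$. The first-order term, after pulling $w$ out by linearity, equals $\sprod{w}{\int_\Sp a(\te)^{-n-1}\te\,d\si(\te)}$, and this vanishes because the inner integral is zero by the Santal\'o condition~\eqref{eq:santalo} evaluated at $z=s(K)$. For the quadratic remainder, I would use $|\sprod{w}{\te}|\le |w|$ and $a(\te)^{-2}\le r_0^{-2}$ to bound it by
$$\frac{C_n|w|^2}{n\,r_0^2}\int_\Sp a(\te)^{-n}\,d\si(\te) \,=\, \frac{C_n|w|^2}{r_0^2}\,|K^{s(K)}|.$$
Combining these estimates yields the claim with a constant $c=c(n)$ independent of $K$.

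The main (and really the only) technical point is securing a Taylor bound that is uniform in $\te$; this requires $|\sprod{w}{\te}/a(\te)|$ to stay in a region where $(1-t)^{-n}$ admits a clean quadratic remainder, which is guaranteed once $|z-s(K)|\le r_0/2$. This is precisely what the ``close enough to $s(K)$'' hypothesis delivers. All other steps are routine, and the central structural input, namely the cancellation of the linear term via~\eqref{eq:santalo}, is just the variational characterization of the Santal\'o point.
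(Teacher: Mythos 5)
Your proof is correct and follows essentially the same strategy as the paper: Taylor-expand the volume integral to second order, use the variational characterization of the Santal\'o point to kill the linear term, and bound the quadratic remainder by the inradius estimate $a(\te)\ge r_0$. The only difference is cosmetic: the paper writes $|K^z|=\int_{K^\circ}(1-\sprod{z}{y})^{-n-1}\,dy$ (the Meyer--Werner representation) and uses the centroid-at-origin form of the Santal\'o condition, whereas you use the spherical polar-coordinate formula $|K^z|=\frac1n\int_\Sp(h_K-\sprod{z}{\cdot})^{-n}\,d\si$ and equation~\eqref{eq:santalo} directly, which keeps the argument self-contained without invoking \cite{MW}.
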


\begin{proof}
Assume that $0$ is the Santal\'o point of $K$. Then $K^{s(K)}=K^{\circ}$,
$$|K^{\circ}|=\int_{K^{\circ}}\,dy\mbox{\quad and\quad} |K^{z}|=
\int_{K^{\circ}}\frac{dy}{(1-\lng z,y\rng)^{n+1}}$$
(cf.\ e.g.\ Lemma~3 of \cite{MW}).
Note that $K^{\circ}\subset B(0,r_0^{-1})$, Hence $|\sprod{z}{y}|\leq \frac{|z|}{r_0}$ for $y\in K^{\circ}$.
We represent
$(1-t)^{-(n+1)}$ as $1+(n+1)t+g(t)$. We have
$$\int_{K^{\circ}}\lng z,y\rng\,dy=0$$
because $0$ is the Santal\'o point of $K$. Thus
$$|K^{z}|=|K^{\circ}|+\int_{K^{\circ}}g(\lng z,y\rng)\,dy$$
and we get
$$|K^{z}|\leq |K^{\circ}|\left(1+\sum_{j=2}^{\infty}
\frac{(n+1)(n+2)\ldots(n+j)}{j!}\left(\frac{|z|}{r_0}\right)^j\right)\,.$$

We finally have
$$
|K^{z}|\leq
|K^{\circ}|\left(1+c\left(\frac{|z|}{r_0}\right)^2\right)
$$
if $|z|\leq \frac{r_0}{2}$ (say).\\
\end{proof}

\begin{remark}
Under the assumptions of Proposition~\ref{prop:santalo_vol}, if
$|z-s(K)|<r_0$ and $K$ contains a ball $B(z,2r_0)$ then it
contains $B(s(K),r_0)$ this will be used later in the application
of Proposition~\ref{prop:santalo_vol}.
\end{remark}

\section{Construction of auxiliary Polytopes}

In this section we prove an analogue of \cite{NPRZ} for the
$n$-dimensional simplex. Thus most of the ideas and tools that are used in the proofs
in this section
are basically adaptations of those from \cite{NPRZ}. Lemma~\ref{lem:deta_difference},
which replaces Section~4 of \cite{NPRZ}, had to be worked out anew and to be put
on a less coordinate dependent basis.

Let $F$ be a $k$-dimensional face of $\De_n$ for $0\le k< n$ and
denote by $c_F$ its centroid. Consider the affine subspace
$H_F=c_F+F^\bot$ where $F^\bot=\{y\in\R^n : \sprod{x}{y}=0 \,\forall
x\in F\}$. Take a $t>0$ such that $tH_F$ is tangent to $K$. In case
that $(1-\de)\De_n \subset K\subset \De_n$, it should be $1-\de\le
t\le 1$.  Let $x_F$ be such a tangent point, that is, $x_F \in
tH_F\cap\partial K$ and put $y_F = tc_F$. Denote the dual face of
$F$ by $F^*=\{y\in\De_n^\circ : \sprod{x}{y}=1 \,\forall x\in F\}$.
By the same way as above, we have points $x_F^*$ and $y_F^*$ by
replacing $F$, $K$ and $\De_n$ by $F^*$, $K^\circ$ and
$\De_n^\circ$, respectively. These four points $x_F$, $y_F$, $x_F^*$
and $y_F^*$ have the following properties.

\begin{lemma}\label{lem:construction}
Let $F$ be a face of $\De_n$. Suppose that $(1-\de)\De_n \subset
K\subset \De_n$. Then
\begin{enumerate}
   \item $\sprod{x_F}{x_F^*} =1= \sprod{y_F}{y_F^*}$
   \item $\sprod{x_F-y_F}{c_F} =0= \sprod{x_F^*-y_F^*}{c_F}$
   \item $|x_F-y_F| < 2\de$ and $|x_F^*-y_F^*| < 2n\de$.
 \end{enumerate}
\end{lemma}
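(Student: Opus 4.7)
The plan is to first establish a few algebraic identities and orthogonalities special to the regular simplex, then to obtain (2) and the cross-term cancellations in (1) by a straightforward expansion, and (3) by a direct bound using the sandwich $(1-\de)\De_n\subset K\subset\De_n$. From $\sprod{v_i}{v_j}=1$ or $-1/n$ together with $\sum_{j=0}^n v_j=0$, a direct calculation gives $|c_F|^2=(n-k)/(n(k+1))$ and $c_{F^*}=c_F/|c_F|^2$; in particular $c_{F^*}$ is a positive multiple of $c_F$ and $\sprod{c_F}{c_{F^*}}=1$. Setting $V_I=\mathrm{span}(v_i:i\in I)$ and $V_J=\mathrm{span}(v_\ell:\ell\notin I)$, I would also verify that $F^\perp=V_I^\perp\subset V_J$---any $u\in V_I^\perp$ can be written $u=\sum_{\ell\notin I}\gamma_\ell v_\ell$ with $\sum\gamma_\ell=0$---and symmetrically $F^{*\perp}\subset V_I$, so that $F^\perp$ and $F^{*\perp}$ are Euclidean-orthogonal. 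Part (2) is then immediate: $x_F\in tc_F+F^\perp$ and $y_F=tc_F$ give $x_F-y_F\in F^\perp\subset c_F^\perp$, and symmetrically $x_F^*-y_F^*\in F^{*\perp}\subset c_{F^*}^\perp=c_F^\perp$.

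For part (1), I would then decompose $x_F=y_F+u$ with $u\in F^\perp$ and $x_F^*=y_F^*+u^*$ with $u^*\in F^{*\perp}$, and expand $\sprod{x_F}{x_F^*}$ into the four terms $\sprod{y_F}{y_F^*}+\sprod{y_F}{u^*}+\sprod{u}{y_F^*}+\sprod{u}{u^*}$. The three cross terms all vanish by the orthogonalities just set up: $\sprod{y_F}{u^*}=t\sprod{c_F}{u^*}=0$ because $c_F\in V_J$ (via $c_F\parallel c_{F^*}$) and $u^*\in V_J^\perp$; likewise $\sprod{u}{y_F^*}=t^*\sprod{u}{c_{F^*}}=0$; and $\sprod{u}{u^*}=0$ by $F^\perp\perp F^{*\perp}$. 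Hence $\sprod{x_F}{x_F^*}=\sprod{y_F}{y_F^*}=tt^*\sprod{c_F}{c_{F^*}}=tt^*$, and (1) reduces to showing $tt^*=1$. I expect this identity to come out of the tangency conditions themselves, read as dual gauge--support relations for $K$ and $K^\circ$ along the one-dimensional common direction $\R c_F=\mathrm{span}(F)\cap\mathrm{span}(F^*)$, via the projection--intersection polar duality $(\pi_W(K))^\circ_W=K^\circ\cap W$ applied with $W=\mathrm{span}(F)$ and $W=\mathrm{span}(F^*)$. This $tt^*=1$ step is the main technical hinge.

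For part (3), I would bound $|u|=|x_F-y_F|$ as follows. Since $x_F\in\partial K$ and $(1-\de)\De_n\subset K\subset\De_n$, one has $\sprod{v_\ell}{x_F}\ge-1/n$ for each $\ell$ (from $x_F\in\De_n$) and $\sprod{v_{\ell_0}}{x_F}\le-(1-\de)/n$ for some $\ell_0\notin I$ (from $x_F\notin\mathrm{int}((1-\de)\De_n)$; the constraints corresponding to $i\in I$ are far from binding since $\sprod{v_i}{x_F}=t|c_F|^2>0$). Combined with $1-\de\le t\le 1$, the quantities $\sprod{v_\ell}{u}=\sprod{v_\ell}{x_F}+t/n$ are each bounded in absolute value by a constant multiple of $\de$; together with $\sum_{\ell\notin I}\sprod{v_\ell}{u}=0$ and the Gram-matrix identity $|u|^2=\frac{n}{n+1}\sum_{\ell\notin I}\sprod{v_\ell}{u}^2$, this yields $|u|<2\de$. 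The dual bound $|x_F^*-y_F^*|<2n\de$ follows by running the same argument on $(K^\circ,\De_n^\circ,F^*)$, with the factor $n$ reflecting $\De_n^\circ=-n\De_n$.
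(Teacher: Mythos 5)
Parts (2) and (3) of your proposal are sound: part (2) matches the paper's (both reduce to $x_F-y_F\in F^\perp\perp c_F$ and its dual), and your Gram-matrix computation in part (3), once one inserts the constraint $\sum_{\ell\notin I}\sprod{v_\ell}{u}=0$ together with the one-sided bound $\sprod{v_\ell}{u}\ge-(1-t)/n$, does give $|u|^2\le\frac{(n-k)(n-k-1)}{n(n+1)}\de^2<\de^2$. The paper gets the same estimate more geometrically, by noting $tH_F\cap\De_n\subset tc_F+(1-t)\conv(v_{k+1},\dots,v_n)$ and bounding $|x_F-y_F|$ by $(1-t)\,\mathrm{diam}\De_n$; this is shorter and avoids the Gram identity, but your route is valid.

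Part (1) has a genuine gap. Your orthogonal decomposition correctly reduces the claim to $tt^*=1$, but note that this decomposition is actually superfluous: since $H_{F^*}=\{w:\sprod{z}{w}=1\ \forall z\in H_F\}$, any $x_F\in tH_F$ and $x_F^*\in t^*H_{F^*}$ automatically satisfy $\sprod{x_F}{x_F^*}=tt^*$, and likewise $\sprod{y_F}{y_F^*}=tt^*\sprod{c_F}{c_{F^*}}=tt^*$. So the entire content of (1) is the identity $tt^*=1$, which is exactly what you label ``the main technical hinge'' and do not prove. The projection--intersection duality you invoke, $(\pi_W(K))^\circ_W=K^\circ\cap W$, translates the tangency parameter $t$ into $1/t=h_{K^\circ\cap V_I}(c_F)$ and $1/t^*=h_{K\cap V_J}(c_{F^*})$; with the additional observation that $(V_I\cap c_F^\perp)\perp(V_J\cap c_F^\perp)$ one can deduce $tt^*\ge1$, but this still only gives one inequality. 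The paper's argument is different and complete: a supporting hyperplane $G$ of $K$ at $x_F$ containing the tangent flat $tH_F$ has its polar point $\al\in\partial K^\circ$ satisfying $\sprod{\al}{th}=1$ for all $h\in H_F$, hence $\al\in\frac1tH_{F^*}\cap\partial K^\circ$, forcing $s\ge1/t$; the reverse inequality $ts\le1$ is immediate from $\sprod{x_F}{x_F^*}=ts$ with $x_F\in K$, $x_F^*\in K^\circ$. Your write-up needs this second, elementary half as well as a rigorous version of the first, so as it stands (1) is not established.
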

\begin{proof}
1. Let $t,s>0$ be such that $x_F\in tH_F\cap\partial K$ and
$x_F^*\in sH_{F^*}\cap\partial K^\circ$. Then we can check easily
that
\begin{eqnarray*}
H_{F^*} &:=& c_{F^*}+(F^*)^\bot \\
&=& \{z\in\R^n : \sprod{z}{h}=1 \,\forall h\in H_F\}.
\end{eqnarray*}
Consider a hyperplane $G$ containing $tH_F$ which is tangent to $K$
at $x_F$ and let $\al$ be the dual point of $G$, that is,
$\sprod{\al}{z}=1$ for every $z\in G$. So $\sprod{\al}{th}=1$ for
all $h\in H_F$ which implies $t\al\in H_{F^*}$. Since
$\al\in\partial K^\circ$ by construction of $G$, we get $\al\in
\frac{1}{t}H_{F^*}\cap\partial K^\circ$ which implies $s=1/t$. After
all, $\sprod{x_F}{x_F^*}=1$ since $x_F\in tH_F$ and $x_F^*\in
\frac{1}{t}H_{F^*}$, and
$\sprod{y_F}{y_F^*}=\sprod{tc_F}{\frac{1}{t}c_{F^*}}=1$.

2. Note that $\sprod{x_F-y_F}{c_F} = \sprod{x_F}{c_F} -
\sprod{y_F}{c_F} =t-t=0$. Similarly, we have
$\sprod{x_F^*-y_F^*}{c_{F^*}} =0$. Thus $\sprod{x_F^*-y_F^*}{c_F}=0$
since $c_{F^*}=\frac{1}{|c_F|^2}c_F$.

3. Write $F=\conv(v_0,\ldots,v_k)$. Then $F^\bot$ is in the linear
span of $v_{k+1},\ldots,v_n$ and hence $tH_F = tc_F + F^\bot$ should
be in the linear span of $v_{k+1},\ldots,v_n$ and $c_F$. Thus
\begin{eqnarray*}
tH_F\cap \De_n &=& (tc_F + F^\bot)\cap \conv(v_0,v_1,\ldots,v_n)\\
&=& (tc_F + F^\bot)\cap \conv(c_F,v_{k+1},\ldots,v_n)\\
&\subset& tc_F + (1-t)\conv(v_{k+1},\ldots,v_n).
\end{eqnarray*}
Therefore
\begin{eqnarray*}
|x_F-y_F| &\le& {\rm diam}(tH_F\cap\De_n) \le (1-t){\rm
diam}\big(\conv(v_{k+1},\ldots,v_n)\big)\\
&\le& {\rm diam}(\De_n)\, \de.
\end{eqnarray*}
Similarly, we get $|x_F^*-y_F^*|\le{\rm diam}(\De_n^\circ)\, \de$.
\end{proof}

Let $\F$ be the set of all faces of $\De_n$. A family $\FF$ of $n$
faces $F_0,\ldots,F_{n-1}$ in $\F$ is called a \emph{flag} over $\F$
if each $F_k$ is a $k$-dimensional face in $\F$ and $F_0 \subset F_1
\subset \cdots \subset F_{n-1}$.

For each face $F\in\F$, we constructed four points $x_F$, $x_F^*$,
$y_F$ and $y_F^*$ in the previous paragraph. These points induce
the following four polytopes (in general, not convex):
\begin{eqnarray*}
P &=& \bigcup_\FF \, \conv\(0,x_{F_0},\ldots,x_{F_{n-1}}\),\quad
P' = \bigcup_\FF \, \conv\(0,x^*_{F_0},\ldots,x^*_{F_{n-1}}\),\\
Q &=& \bigcup_\FF \, \conv\(0,y_{F_0},\ldots,y_{F_{n-1}}\),\quad Q'
= \bigcup_\FF \, \conv\(0,y^*_{F_0},\ldots,y^*_{F_{n-1}}\)
\end{eqnarray*}
where $\FF:=\{F_0,\ldots,F_{n-1}\}$ runs over all flags of $\F$.
Under the assumption $(1-\de)\De_n \subset K\subset \De_n$, they
clearly satisfy $P\subset K$, $P'\subset K^\circ$, $(1-\de)\De_n
\subset Q\subset \De_n$ and $\De_n^\circ \subset Q' \subset
\frac{1}{1-\de}\De_n^\circ$.

\begin{lemma}\label{lem:polytope_Q}
$\vol{Q}\cdot\vol{Q'} \ge \vol{\De_n}\cdot\vol{\De_n^\circ}$
\end{lemma}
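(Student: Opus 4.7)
The plan is to express all four volumes as sums indexed by the flags of $\De_n$, and then apply the Cauchy--Schwarz inequality together with the transitivity of the symmetry group of the regular simplex on its flags.

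First, I would observe that the barycentric subdivision decomposes
\[
\De_n = \bigcup_\FF \conv\bigl(0, c_{F_0}, \ldots, c_{F_{n-1}}\bigr)
\]
with pairwise disjoint interiors (here the origin plays the role of $c_{\De_n}$). Since $y_{F_k} = t_{F_k} c_{F_k}$ is a positive radial rescaling, the corresponding $Q$-simplex $\conv(0, y_{F_0}, \ldots, y_{F_{n-1}})$ lies in the same open cone from the origin generated by $c_{F_0}, \ldots, c_{F_{n-1}}$. Distinct flags give distinct open cones (this is exactly why barycentric subdivision subdivides), so the $Q$-pieces also have pairwise disjoint interiors and tile $Q$. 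Computing the determinants yields
\[
\vol{Q} = \sum_\FF T_\FF\, a_\FF, \qquad \vol{\De_n} = \sum_\FF a_\FF,
\]
where $a_\FF := \tfrac{1}{n!}|\det(c_{F_0},\ldots,c_{F_{n-1}})|$ and $T_\FF := \prod_{k=0}^{n-1} t_{F_k}$.

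Next, I would use the symmetry of the regular simplex: the group $S_{n+1}$ acts on $\De_n$ by permuting the vertices $v_0,\ldots,v_n$ and this action is transitive on flags. Hence all $a_\FF$ equal a common value $a$, and $\vol{\De_n}=N a$ where $N=(n+1)!$ is the number of flags. For the polar side, the assignment $\FF \mapsto (F_{n-1}^*,\ldots,F_0^*)$ is a bijection between flags of $\De_n$ and flags of $\De_n^\circ$. Combined with the identity $y_{F_k}^* = (1/t_{F_k})\,c_{F_k^*}$ established in the setup, the same argument applied to $\De_n^\circ$ (which carries the same $S_{n+1}$-symmetry, since $\De_n^\circ = -n\De_n$) gives
\[
\vol{\De_n^\circ} = N b, \qquad \vol{Q'} = b \sum_\FF \frac{1}{T_\FF},
\]
where $b$ is the analogous common value for $\De_n^\circ$.

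Finally, the Cauchy--Schwarz inequality yields
\[
\Bigl(\sum_\FF T_\FF\Bigr)\Bigl(\sum_\FF \tfrac{1}{T_\FF}\Bigr) \ge N^2,
\]
and therefore $\vol{Q}\cdot\vol{Q'} = ab\bigl(\sum_\FF T_\FF\bigr)\bigl(\sum_\FF 1/T_\FF\bigr) \ge N^2 ab = \vol{\De_n}\vol{\De_n^\circ}$. The main obstacle is the first step: one has to verify carefully that the radially rescaled pieces really do have disjoint interiors, which reduces to the flag-cone tiling property of barycentric subdivision. Once that geometric bookkeeping is in place, the proof is a transparent Cauchy--Schwarz applied to the numbers $\{T_\FF\}_\FF$, and the role of regularity is solely to make the weights $a_\FF$ and $b_\FF$ constant in $\FF$.
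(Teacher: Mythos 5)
Your proof is correct and follows the same approach as the proof the paper references (Lemma~7 of \cite{NPRZ}): decompose $Q$, $Q'$, $\De_n$, $\De_n^\circ$ over the barycentric subdivision indexed by flags, use transitivity of the symmetry group to make the flag--piece volumes constant, and apply Cauchy--Schwarz to the radial scaling factors $T_\FF$ and $1/T_\FF$. The paper itself gives no details, simply citing the analogous NPRZ lemma; your argument is the correct adaptation to the regular simplex, where $S_{n+1}$ acting simply transitively on the $(n+1)!$ flags plays the role that the hyperoctahedral group plays for the cube.
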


The proof is essentially the same as the proof of Lemma 7 of \cite{NPRZ}.

\begin{lemma}\label{lem:vol_difference}
Suppose that $(1-\de)\De_n \subset K\subset \De_n$. Then there exist
constants $C_1$ and $C_2$ such that $\big|\vol{P}-\vol{Q}\big|\le
C_1\de^2$ and $\big|\vol{P'}-\vol{Q'}\big|\le C_2\de^2$
\end{lemma}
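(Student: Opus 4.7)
The plan is to decompose both $P$ and $Q$ over the $(n+1)!$ flags and compare them simplex-by-simplex. Write $S_\FF^{(P)}=\conv(0,x_{F_0},\ldots,x_{F_{n-1}})$ and $S_\FF^{(Q)}=\conv(0,y_{F_0},\ldots,y_{F_{n-1}})$. When $K=\De_n$ both families coincide with the barycentric subdivision of $\De_n$; for $\de$ small, the bounds $1-\de\le t_i\le 1$ and $|u_{F_i}|\le 2\de$ of Lemma~\ref{lem:construction} persist the pairwise disjointness of interiors for both $P$ and $Q$, so $\vol{P}=\sum_\FF\vol{S_\FF^{(P)}}$ and $\vol{Q}=\sum_\FF\vol{S_\FF^{(Q)}}$.

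Next I would introduce the piecewise-linear map $f\colon Q\to P$ sending $y_{F_i}\mapsto x_{F_i}$ (and $0\mapsto 0$) affinely on each $S_\FF^{(Q)}$. Adjacent simplices share only the origin and a subcollection of the $y_{F_j}$'s, so $f$ is continuous and, for small $\de$, a homeomorphism onto $P$. On $S_\FF^{(Q)}$ the Jacobian is $M_\FF=X_\FF Y_\FF^{-1}=I+U_\FF Y_\FF^{-1}$ with $U_\FF=X_\FF-Y_\FF$, and $|\det M_\FF|-1=\operatorname{tr}(Y_\FF^{-1}U_\FF)+O(\de^2)$. Change of variables, Cramer's rule, and the replacement of $y_{F_j}=t_j c_{F_j}$ by $c_{F_j}$ (an $O(\de)$ correction absorbed into the $O(\de^2)$ remainder) then give
$$\vol{P}-\vol{Q}=\frac{1}{n!}\sum_\FF\eps_\FF\sum_{i=0}^{n-1}\det[c_{F_0},\ldots,u_{F_i},\ldots,c_{F_{n-1}}]+O(\de^2),$$
with $\eps_\FF=\operatorname{sign}\det[c_{F_0},\ldots,c_{F_{n-1}}]$.

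The crucial step is the vanishing of the first-order sum. Exchanging summations and collecting by the face $F=F_i$, it equals $\frac{1}{n!}\sum_i\sum_{\dim F=i}L_F(u_F)$ where
$$L_F(v):=\sum_{\FF\,:\,F_i=F}\eps_\FF\det[c_{F_0},\ldots,v,\ldots,c_{F_{n-1}}]$$
is a linear functional on $\R^n$ (with $v$ sitting in slot $i$). I would show $L_F(v)=\beta_F\sprod{c_F}{v}$ for some scalar $\beta_F$, which gives $L_F(u_F)=0$ since $u_F\in F^\bot$ is perpendicular to $c_F$. The proportionality follows from symmetry: the stabilizer of $F$ in the symmetry group of $\De_n$, namely $S_{|F|}\times S_{|F^c|}\subset S_{n+1}$ acting by permuting $v_0,\ldots,v_n$, extends to an orthogonal action on $\R^n$ whose unique fixed line is $\operatorname{span}(\sum_{v\in F}v)=\operatorname{span}(c_F)$ (because $\sum_{v\in F^c}v=-\sum_{v\in F}v$). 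This stabilizer permutes flags through $F$, and a sign-tracking check shows $L_F(\rho^{-1}v)=L_F(v)$ for every $\rho$ in it; a linear functional invariant under a group whose fixed subspace is one-dimensional must be a scalar multiple of the functional attached to that line.

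The main obstacle is the sign-tracking needed to make $L_F$ manifestly stabilizer-invariant. A more direct alternative, in the coordinate-free spirit advertised in the introduction, is to apply Cramer's rule using the explicit bidiagonal change of basis from $\{c_{F_j}\}$ to $\{v_{\sigma(j)}\}$ (where $\sigma$ is the permutation encoding the flag), which reduces the cancellation to the identity $\sprod{u_F}{\sum_{w\notin F}v_w}=-\sprod{u_F}{\sum_{v\in F}v}=0$. Either way, summing the $O(\de^2)$ remainders over the finite collection of flags yields $|\vol{P}-\vol{Q}|\le C_1\de^2$. The bound $|\vol{P'}-\vol{Q'}|\le C_2\de^2$ follows by the same argument applied to the regular simplex $\De_n^\circ=-n\De_n$, using the analogous perpendicularity $x_F^*-y_F^*\in(F^*)^\bot$ from Lemma~\ref{lem:construction}.
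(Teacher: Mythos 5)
Your argument is correct, and it is substantially more detailed than what the paper actually supplies: the paper's proof of this lemma is a two-sentence citation, deferring to Lemma~4 of \cite{NPRZ} (to justify that the flag decomposition is a genuine triangulation so that volumes add) and to Lemma~5 of \cite{NPRZ} (a general second-order volume-stability estimate, applied with $X_0=\{c_F\}$, $X_1=\{x_F\}$, $X_2=\{y_F\}$ and the starred analogues). What you have done is re-derive the content of that cited machinery from scratch. The skeleton matches what the citation implicitly invokes: decompose $P$ and $Q$ over flags, compare simplex-by-simplex via $\frac{1}{n!}\det$, linearize $\det(I+Y_\FF^{-1}U_\FF)$, and kill the first-order term using the orthogonality $\sprod{x_F-y_F}{c_F}=0$ from Lemma~\ref{lem:construction}. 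Your mechanism for the cancellation is the genuinely interesting part: showing that the linear functional $L_F$ is invariant under the stabilizer $S_{|F|}\times S_{|F^c|}$ of $F$, whose fixed subspace is $\mathrm{span}(c_F)$, so $L_F=\beta_F\sprod{c_F}{\cdot}$. This is clean, coordinate-free, and necessarily different in detail from whatever \cite{NPRZ} does for the cube (different symmetry group); the bidiagonal Cramer's-rule alternative you sketch, using $v_{\sigma(j)}=(j{+}1)c_{F_j}-jc_{F_{j-1}}$, is the more computational counterpart. Two points you state but should not leave implicit if this were to replace the paper's citation: (i) that the $(n+1)!$ simplices $\conv(0,x_{F_0},\ldots,x_{F_{n-1}})$ have pairwise disjoint interiors for $\de$ small, so $\vol{P}=\sum_\FF\vol{S_\FF^{(P)}}$ — this is exactly the stability-of-triangulation fact the paper attributes to Lemma~4 of \cite{NPRZ} and needs its own short argument; and (ii) that $\mathrm{sign}\det X_\FF=\mathrm{sign}\det Y_\FF=\eps_\FF$ for small $\de$, so the absolute values in $\vol{S_\FF^{(P)}}-\vol{S_\FF^{(Q)}}$ can be removed before linearizing.
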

\begin{proof}
We can check that Lemma 4 of \cite{NPRZ} is also true for the simplex $\De_n$.
This fact, together with Lemma 1 here and Lemma 5 of \cite{NPRZ} (taking $X_0=\{c_F\}$, $X_1=\{x_F\}$, $X_2=\{y_F\}$ and similarly for the starred points), completes the proof of the lemma.
\end{proof}

Suppose that all the centroids of facets of $\De_n$ belong to $K$.
Then, for every facet $F$ of $\De_n$,
$$
x_F=y_F=c_F \quad\text{and}\quad x_F^*=y_F^*=c_{F^*}.
$$
This is helpful in the proof of the following lemma.
\begin{lemma}\label{lem:deta_difference}
There exists $c'>0$ such that if $\de = \min \{ d>0 : (1-d)\De_n
\subset K \subset \De_n \}$ is small enough and if all the
centroids of facets of $\De_n$ belong to $K$, then $\vol{K} \ge
\vol{P} + c'\de$ or $\vol{K^\circ} \ge \vol{P'} + c'\de$.
\end{lemma}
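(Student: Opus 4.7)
The plan is to use the minimality of $\de$ to locate a support direction $\te^\ast\in\Sp$ along which $K$ is maximally shrunken relative to $\De_n$, and then to exhibit a region of volume of order $\de$ lying in either $K\setminus P$ or $K^\circ\setminus P'$. By definition, $\de$ equals $\max_{\te\in\Sp}\bigl(1-h_K(\te)/h_{\De_n}(\te)\bigr)$, attained at some $\te^\ast\in\Sp$. Let $F_0\in\F$ be the smallest face of $\De_n$ whose vertices all realize $h_{\De_n}(\te^\ast)=\max_j\sprod{v_j}{\te^\ast}$. The supporting hyperplane $H^\ast=\{x:\sprod{x}{\te^\ast}=h_K(\te^\ast)\}$ of $K$ then also supports $(1-\de)\De_n$, touching it along $(1-\de)F_0$, so every $(1-\de)v_j$ with $v_j\in F_0$ lies in $\partial K\cap H^\ast$. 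The facet-centroid hypothesis rules out $\dim F_0=n-1$: if $\te^\ast$ were the unit outward normal of a facet $F$ of $\De_n$, the scalar-product computation $\sprod{c_F}{\te^\ast}=|c_F|>(1-\de)|c_F|=h_K(\te^\ast)$ would place $c_F$ strictly outside the half-space bounded by $H^\ast$ that contains $K$, contradicting $c_F\in K$. Hence $\dim F_0\le n-2$, and the argument splits on whether $\dim F_0\ge 1$ or $\dim F_0=0$.

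If $\dim F_0\ge 1$, I would argue $|K|\ge|P|+c'\de$. Here $H^\ast$ contains the whole set $(1-\de)F_0$ in $\partial K$. For each facet $F\supset F_0$ of $\De_n$, the centroid $c_F$ lies in $\partial K$ by hypothesis and on the side of $H^\ast$ opposite to the origin. By convexity, the wedge $W:=\conv\bigl((1-\de)F_0\cup\{c_F:F\supset F_0\text{ facet}\}\bigr)$ is contained in $K$; its transverse thickness perpendicular to $H^\ast$ is of order $\de$, while its base on $(1-\de)F_0$ has $(\dim F_0)$-dimensional measure bounded below by a dimensional constant. On the other hand, every flag simplex in $P$ passing through some subface of $F_0$ has its ``apex-side'' vertices $x_{F_k}$ ($F_k\subseteq F_0$) lying on $H^\ast$ (because $H^\ast$ supports $K$ there) and its ``outer-side'' vertices $x_{F_k}$ ($F_k$ a facet containing $F_0$) fixed at the facet centroids. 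A direct geometric check shows that no interior point of $W$ lies in any such flag simplex; integrating thickness against base area then yields $|K|\ge|P|+c'\de$.

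If $\dim F_0=0$, so $F_0=\{v_i\}$ and $H^\ast$ is tangent to $(1-\de)\De_n$ only at $(1-\de)v_i$, I would argue $|K^\circ|\ge|P'|+c'\de$. The dual point $y^\ast:=\te^\ast/h_K(\te^\ast)$ lies in $\partial K^\circ$ at a distance a factor $1/(1-\de)\approx 1+\de$ further from the origin than the corresponding point $\te^\ast/h_{\De_n}(\te^\ast)\in\partial\De_n^\circ$; this is a ``spike'' in $K^\circ$ of depth of order $\de$ beyond $\De_n^\circ$ in direction $\te^\ast$. The construction of $P'$ uses dual tangent points $x^\ast_F$ which for $F$ a facet of $\De_n$ coincide with the vertices $-nv_j$ of $\De_n^\circ$ (the dual facet-centroid condition being automatic here); these points form a rigid skeleton shared by $\partial K^\circ$ and $P'$. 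A careful computation should then exhibit a neighborhood of $y^\ast$ -- of base area bounded below by a dimensional constant and height of order $\de$ -- lying in $K^\circ\setminus P'$.

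The main obstacle I expect is in the vertex case: verifying that the spike constructed in $K^\circ$ is genuinely disjoint from every flag simplex of $P'$ requires tracking the full chain $x^\ast_{F_0},\ldots,x^\ast_{F_{n-1}}$ along each flag, not merely the outermost vertex, and this in directions that are not themselves anchored at facet centroids of $\De_n^\circ$. The coordinate-free methodology cited in the introduction, together with the rigidity of the facet-centroid anchors in both $P$ and $P'$, should be the appropriate tools for treating all flags uniformly.
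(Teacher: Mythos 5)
Your proposal takes a genuinely different route from the paper, and unfortunately it has gaps serious enough that I don't think the plan would go through as stated.

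\textbf{The dichotomy is not a case split on $\dim F_0$.} The paper does not decide in advance which of the two conclusions holds; it always descends to a vertex $v_0$ with $(1-\de)v_0\in\partial K$ (this is automatic, since a simplex is contained in $K$ iff its vertices are), establishes the purely polytopal statement $(1+c_1\de)P'\not\supset P^\circ$ by constructing a specific witness point $\tilde x\in P^\circ\setminus(1+c_1\de)P'$, and only at the very end derives the either/or: either $K\not\subset(1+\tfrac{c_1}{2}\de)\conv(P)$, which gives $|K|\ge|P|+c\de$, or else $(1-\tfrac{c_1}{2}\de)P^\circ\subset K^\circ$, which pushes a rescaled copy of $\tilde x$ into $K^\circ\setminus(1+\tfrac{c_1}{4}\de)P'$ and gives $|K^\circ|\ge|P'|+c\de$. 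Your split instead tries to assign the primal conclusion to $\dim F_0\ge1$ and the dual conclusion to $\dim F_0=0$. In the vertex case $\dim F_0=0$ both alternatives of the lemma are genuinely possible (that is the whole point of the paper's last paragraph), so committing to the dual conclusion there is not safe; you would still need an either/or inside that case.

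\textbf{The wedge argument in the case $\dim F_0\ge1$ is unsupported and likely fails.} You assert that the vertices $x_{F_k}$ of flag simplices with $F_k\subseteq F_0$ ``lie on $H^\ast$.'' That is not so: $x_{F_k}$ lies on the tangent flat $tH_{F_k}=tc_{F_k}+F_k^\perp$, whose normal direction is $c_{F_k}$, not $\te^\ast$. There is no reason for these points to sit on the supporting hyperplane $H^\ast$ in direction $\te^\ast$, so the claim ``no interior point of $W$ lies in any such flag simplex'' has no basis. Worse, the transverse thickness of $W$ in direction $\te^\ast$ is controlled by $(1-\de)h_{\De_n}(\te^\ast)-\sprod{c_F}{\te^\ast}$, and this quantity can be \emph{negative} or arbitrarily small when $\te^\ast$ lies near the normal-cone boundary of a larger face (i.e.\ when $\sprod{v_j}{\te^\ast}$ for some $v_j\notin F_0$ is close to $h_{\De_n}(\te^\ast)$). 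So even $|W|\gtrsim\de$ is not guaranteed. The key quantitative input of the paper's proof is absent from your sketch: the cone argument showing that the tangent height satisfies $t\le1-\de/n$, which is what actually produces a spike of guaranteed depth $\de/n$ in the direction $v_0$, followed by the carefully chosen perturbation $v=\gamma(-nv_1-v_0)$ and the flag-by-flag verification that $\tilde x=x_0^*+v$ lies in $P^\circ$ but outside $(1+c_1\de)P'$. You acknowledge the vertex case as an ``obstacle'' without resolving it, and in fact it is the entire content of the lemma. I would suggest abandoning the $\dim F_0$ split, working directly with a vertex $(1-\de)v_0\in\partial K$, and proving the polytopal separation $(1+c_1\de)P'\not\supset P^\circ$, from which the either/or falls out by Lemma~2 of \cite{NPRZ}.
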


\begin{proof}
We begin by proving that there exists a constant $c_1>0$ such that
$(1+c_1\de)P'\not\supset P^{\circ}$.\quad
Since $\de$ is the minimal number that satisfies $(1-\de)\Delta_n\subset K$, we can find a vertex
$v_j$ of $\Delta_n$, say $v_0$, such that $(1-\de)v_0\in\partial K$. Taking $F_0=\{v_0\}$
in Lemma~\ref{lem:construction}, we conclude the existence of
$$x_0=tv_0+h\in \partial K,\mbox{\ with\ } h\in v_0^{\perp},\quad |h|<C\de,\quad
1-\de\leq t\leq 1,$$
and
$$x_0^*=sv_0\in \partial K^{\circ},\mbox{\ with\ } 1\leq s\leq \frac{1}{1-\de},$$
such that $\langle x_0,x_0^*\rangle=ts=1$.

Let $z^*\in \partial K^{\circ}$ be such that $\lng z^*,(1-\de)v_0\rng=1$. Then
$H=\{x\,;\,\lng z^*,x \rng=1\}$ is a support hyperplane of $K$ at $(1-\de)v_0$. Thus
$\lng x_0,z^* \rng \leq 1$. Since $H$ is also a support hyperplane of $(1-\de)\Delta_n$ at
$(1-\de)v_0$, it follows that $x_0$ lies below the one-sided cone $\cal C$ with
vertex $(1-\de)v_0$,
which is the complimentary half of the cone with the same vertex, spanned by
$(1-\de)\Delta_n$. Take a typical facet $G$ of the cone $\cal C$. Say
$G\subset\{x\,;\, \lng x,-nv_1 \rng=
1-\de\}$. The highest point (with respect to the direction $v_0$) of $G\cap \Delta_n$ is the
intersection of $G$ with the line segment
$[v_0,v_1]$. A simple calculation shows that this is the point $\beta v_0+(1-\beta)v_1$
with $\beta=1-\frac{\de}{n+1}$. The height of this point is found by computing its projection
on the altitude $[v_0,-\frac{v_0}{n}]$ of $\Delta_n$. This is
$\beta v_0+(1-\beta )(-\frac{v_0}{n})=(1-\frac{\de}{n})v_0$. we conclude that
$$t\leq 1-\frac{\de}{n}\mbox{\ and\ }s\geq \frac{1}{1-\frac{\de}{n}}\,.$$
Thus
$$x_0^*=sv_0\mbox{\ with\ }\frac{1}{1-\frac{\de}{n}}\leq s\leq
\frac{1}{1-\de}\,.$$

We look now at the vector $h\in v_0^{\perp}$ that was found above
(with $x_0=tv_0+h$). There exists one of the vectors $-nv_j-v_0$,
$j=1,\ldots,n$, which are vertices in $v_0^{\perp}$ of a regular
simplex with center $0$, such that $\lng -nv_j-v_0,h \rng\leq 0$. We
may assume that this $j$ is $1$ and denote $v=\gamma(-nv_1-v_0)\in
v_0^{\perp}$ for some $\gamma>0$ whose size will be determined
later. Note that $|v|=\gamma\sqrt{n^2-1}$ and that $\lng v,h
\rng\leq 0$.

Define $\tilde x=x_0^*+v$. We claim that if $\gamma$ is chosen correctly then $\tilde x\in P^{\circ}$
and, for some $c_1>0$, $\tilde x\not\in(1+c_1)P'$. To verify that $\tilde x\in P^{\circ}$ we have to
check that $\lng \tilde x,x_F \rng\leq 1$ for all the vertices $x_F$ of $P$.
These vertices are of the form $x_F=\frac{1}{k}\sum_{i=1}^kv_{j_i}+g$, $1\leq k\leq n$, $|g|<C\de$
and $g=0$ if $k=n$.
\begin{itemize}
\item[1)] Let $x_{F_0}=x_0=tv_0+h$. Then $\lng \tilde x,x_{F_0} \rng=\lng x_0^*,x_0 \rng +\lng v,x_0 \rng
=1+\lng v,h \rng\leq 1$.
\item[2)] Let $x_F=\frac{1}{k}\sum_{i=1}^kv_{j_i}+g$, $1\leq k\leq n$, $|g|<C\de$. Assume first
that the index $0$ is not among the $j_i$-s. Say
$x_F=\frac{1}{k}\sum_{j=1}^kv_{j}+g$ (it is true that $v_1$ plays a somewhat different role
than the other indices $j\geq 1$, but the result of the coming evaluation comes up to be the same). Then
$$\lng \tilde x,x_F \rng=\frac{1}{k}\sum_{j=1}^ks\lng v_0,v_j\rng+\lng v,\frac{1}{k}\sum_{j=1}^kv_{j}\rng+
\lng sv_0,g\rng+\lng v,g\rng\,.$$
We have $|\frac{1}{k}\sum_{j=1}^kv_{j}|=\sqrt{\frac{n-k+1}{nk}}$ thus
$$\lng \tilde x,x_F \rng\leq s\left(-\frac{1}{n}+C\de\right)+|v|\left(\sqrt{\frac{n-k+1}{nk}}+C\de\right)<1$$
for small $\de$, if $\gamma<\frac{c_2}{n}$ for an appropriate constant $c_2$.
\item[3)] Let $x_F$ be as in 2) above, now with $0$ among the $j_i$-s, say
$x_F=\frac{1}{k}\sum_{j=0}^{k-1}v_{j}+g$ (same remark about $v_1$). The calculation now gives
$$\lng \tilde x,x_F \rng\leq \frac{s}{k}-\frac{s(k-1)}{nk}+
|v||\frac{1}{k}\sum_{j=0}^{k-1}v_{j}|+(s+|v|)|g|\leq$$
$$s\left(\frac{1}{k}-\frac{k-1}{kn}+C\de\right)+|v|\left(\sqrt{\frac{n-k+1}{nk}}+C\de\right)<1$$
if $\de$ is small
and $\gamma\leq\frac{c_2}{n}$ (note that in this case $k\geq 2$).
\end{itemize}

We fix now the constant $\gamma$ that was introduced above to be precisely $\frac{c_2}{n}$
with the constant $c_2$ obtained above. Then $\tilde x\in P^{\circ}$ (provided that $\de$ is small
enough). As $\tilde x$ is a positive linear combination of $x_0^*=sv_0$ and
$-nv_1$, the line segment connecting the origin to $\tilde x$ must cross the edge
$[sv_0,-nv_1]$ of $P'$. Thus we look for $M>0$ and $0<\theta<1$ such that the equality
$$sv_0+v=M\left(\theta sv_0+(1-\theta)(-nv_1)\right)$$
will hold. Substituting $v=\gamma(-nv_1-v_0)$ we get $M=1+\gamma(1-\frac{1}{s})$.
As we had the evaluation $\frac{1}{s}=t\leq 1-\frac{\de}{n}$, we get
$$M\geq 1+\frac{\gamma}{n}\de=1+\frac{c_2}{n^2}\de\,.$$
That is, if $c_1<\frac{c_2}{n^2}$ then $\tilde x\not\in (1+c_1\de)P'$  and we get
$(1+c_1\de)P'\not\supset P^{\circ}$.\vspace{3mm}

Assume that $K\subset (1+\frac{c_1}{2}\de)\conv(P)$. Then
$(1-\frac{c_1}{2}\de)P^{\circ}\subset \frac{1}{1+\frac{c_1}{2}\de}P^{\circ}\subset K^{\circ}$. Let
$$\tilde{\tilde x}=(1-\frac{c_1}{2}\de)\tilde x\in (1-\frac{c_1}{2}\de)P^{\circ}\subset K^{\circ}\,.$$
By the preceding paragraph we have $\tilde{\tilde x}\not\in (1+c_1\de)(1-\frac{c_1}{2}\de)P'$.
As $(1+c_1\de)(1-\frac{c_1}{2}\de)>1+\frac{c_1}{4}\de$ if $\de<\frac{1}{2c_1}$, we conclude
that for $\de$ small enough, either $K\not\subset (1+\frac{c_1}{2}\de)\conv(P)$; in which case,
by Lemma~2 of \cite{NPRZ}, $|K|\geq |P|+c_3\de$; or there exists
$\tilde{\tilde x}\in K^{\circ}$, such that the line segment $[0,\tilde{\tilde x}]$ intersects
the edge $[x_0^*,-nv_1]$ of $P'$, but $\tilde{\tilde x}\not\in (1+\frac{c_1}{4}\de)P'$. That is,
$K^{\circ}\not\subset (1+c_4\de)P'$. Lemma~2 of \cite{NPRZ} completes now the proof. We remark,
that, since $P'$ is, in general, not convex the assumption of the uniform lower bound on the
$(n-1)$-dimensional volume of its facets should be verified using the $\de$-approximation.
\end{proof}

\begin{proposition}\label{prop:construction}
Let $K$ be a convex body in $\R^n$ which is close to $\De_n$ in the
sense that $\de = \min \{ d>0 : (1-d)\De_n \subset K \subset \De_n
\}$ is small enough. Suppose that all the centroids of facets of
$\De_n$ belong to $K$. Then we have
$$\vol{K}\vol{K^\circ} \ge \vol{\De_n}\vol{\De_n^\circ} +C\de.$$
\end{proposition}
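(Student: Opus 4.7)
The plan is to combine the three preceding lemmas in a direct way, using the fact that the $\de$-linear improvement from Lemma~\ref{lem:deta_difference} dominates the $\de^2$-order errors from Lemma~\ref{lem:vol_difference} and Lemma~\ref{lem:polytope_Q}.

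First I would note that, by construction, $P\subset K$ and $P'\subset K^\circ$, so trivially $\vol{K}\vol{K^\circ} \ge \vol{P}\vol{P'}$. The key improvement comes from Lemma~\ref{lem:deta_difference}: after possibly interchanging the roles of $K$ and $K^\circ$ (the hypotheses are symmetric under polarity), we may assume $\vol{K} \ge \vol{P} + c'\de$, so that
\[
\vol{K}\vol{K^\circ} \;\ge\; \bigl(\vol{P}+c'\de\bigr)\vol{P'} \;=\; \vol{P}\vol{P'} + c'\de\,\vol{P'}.
\]

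Next, I would bound the product $\vol{P}\vol{P'}$ from below by $\vol{\De_n}\vol{\De_n^\circ}$ up to $O(\de^2)$. Lemma~\ref{lem:vol_difference} gives $\vol{P}\ge \vol{Q}-C_1\de^2$ and $\vol{P'}\ge \vol{Q'}-C_2\de^2$, and since $\vol{Q}\le\vol{\De_n}$ and $\vol{Q'}\le\vol{\frac{1}{1-\de}\De_n^\circ}$ are both bounded, multiplying these out yields
\[
\vol{P}\vol{P'} \;\ge\; \vol{Q}\vol{Q'} - C_3\de^2 \;\ge\; \vol{\De_n}\vol{\De_n^\circ} - C_3\de^2,
\]
by Lemma~\ref{lem:polytope_Q}. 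At the same time, the same estimates give $\vol{P'} \ge \vol{\De_n^\circ} - C_2\de^2 \ge \tfrac{1}{2}\vol{\De_n^\circ}$ for $\de$ small enough, so $c'\de\,\vol{P'}\ge c''\de$ for a positive constant $c''=c''(n)$.

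Combining these two estimates,
\[
\vol{K}\vol{K^\circ} \;\ge\; \vol{\De_n}\vol{\De_n^\circ} + c''\de - C_3\de^2 \;\ge\; \vol{\De_n}\vol{\De_n^\circ} + C\de
\]
for $\de$ smaller than some threshold depending only on $n$, which is the desired conclusion. The only mild subtlety is the symmetry step: Lemma~\ref{lem:deta_difference}'s two alternatives correspond to the roles of $K$ and $K^\circ$, and the construction of $P$ and $P'$ is symmetric under polarity (since $\De_n^\circ = -n\De_n$ and the roles of $F$ and $F^*$ are exchanged), so the argument applies verbatim in the second case after swapping $P\leftrightarrow P'$ and $Q\leftrightarrow Q'$. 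I expect no essential obstacle here beyond being careful that the constants in Lemma~\ref{lem:vol_difference} and the lower bound on $\vol{P'}$ (or $\vol{P}$) are truly independent of $K$ under the hypothesis $(1-\de)\De_n\subset K\subset\De_n$.
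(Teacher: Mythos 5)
Your argument is correct and follows essentially the same route as the paper: apply Lemma~\ref{lem:deta_difference} to one factor, control $\vol{P},\vol{P'}$ by $\vol{Q},\vol{Q'}$ via Lemma~\ref{lem:vol_difference}, and invoke Lemma~\ref{lem:polytope_Q}, absorbing the $O(\de^2)$ terms into the $c'\de$ gain. The only (minor) difference is presentational: you separate the product estimate $\vol{P}\vol{P'}\ge\vol{\De_n}\vol{\De_n^\circ}-C_3\de^2$ from the linear gain $c'\de\vol{P'}$, whereas the paper multiplies $(\vol{Q}-c_1\de^2+c\de)(\vol{Q'}-c_2\de^2)$ directly, and you make explicit the case of the second alternative in Lemma~\ref{lem:deta_difference} (which the paper leaves to symmetry) — though strictly speaking what you need is not symmetry under polarity but simply that the product $\vol{P}\vol{P'}$ and Lemmas~\ref{lem:polytope_Q} and~\ref{lem:vol_difference} are symmetric under swapping primed and unprimed objects, so the arithmetic is identical in either case.
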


\begin{proof}
We assume that $\vol{K} \ge \vol{P}+c\de$ by Lemma
\ref{lem:deta_difference}. Moreover, Lemma \ref{lem:vol_difference}
implies
\begin{eqnarray*}
\vol{K}\vol{K^\circ} &\ge& (\vol{P} + c\de) \vol{P'}\\
&\ge& (\vol{Q}-c_1\de^2+c\de) (\vol{Q'} - c_2\de^2) \\
&=& \vol{Q}\vol{Q'} + \vol{Q'}(c\de-c_1\de^2) - c_2\vol{Q}\de^2 -c_2\de^2(c\de-c_1\de^2) \\
&\ge& \vol{Q}\vol{Q'} + \vol{\De_n^\circ}(c\de-c_1\de^2) -
c_2\vol{\De_n}\de^2 -c_2\de^2(c\de-c_1\de^2)
\end{eqnarray*}
Since $\de$ is small enough, the above inequality implies that
$\vol{K}\vol{K^\circ} \ge \vol{Q}\vol{Q'} + C\de$ for a constant
$0<C< \vol{\De_n^\circ}c$. Finally, Lemma \ref{lem:polytope_Q}
completes the proof.

\end{proof}

\section{Proof of Theorem \ref{th:main}}

For the proof of main theorem, let us start with the following
lemma.
\begin{lemma}\label{lem:distance}
Let $L$ be a convex body in $\R^n$ containing the origin. Then, for
every convex body $K$ with $d_{BM}(K,L)<1+\de$, there are a constant
$C=C(L)$ and an affine isomorphism $A:\R^n\rightarrow\R^n$ such that
$$(1-C\de)L \subset A(K) \subset L.$$ In particular, if $L=\De_n$ and
$\de>0$ is small enough, then such $C$ and $A$ can be chosen to
satisfy that every centroid of facets of $L$ belongs to $A(K)$.
\end{lemma}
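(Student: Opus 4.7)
The plan is as follows. For the first part, by the definition of $d_{BM}(K,L)<1+\de$, I choose affine isomorphisms $A_0, B_0$ with $A_0(L)\subset B_0(K)\subset (1+\de)A_0(L)$. Writing $A_0(x)=T_0x+v_0$ and applying $A_0^{-1}$ to the whole sandwich while tracking how the scaling $(1+\de)\cdot$ (about the origin of $\R^n$) interacts with the translation part of $A_0$, a direct computation gives
$$
L\;\subset\; K'\;\subset\; (1+\de)L+\de u,\qquad u=-A_0^{-1}(0),
$$
where $K'=A_0^{-1}B_0(K)$. The outer inclusion $L\subset(1+\de)L+\de u$, combined with the support-function identity $\{w\in\R^n:L+w\subset(1+\de)L\}=\de L$ (which holds since $0\in{\rm int}(L)$), forces $u\in -L$, and in particular $|u|$ is bounded independently of $\de$. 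I then apply $\Psi(x)=(x-\de u)/(1+\de)$, which sends $(1+\de)L+\de u$ onto $L$ and $L$ onto $(L-\de u)/(1+\de)$. The desired inclusion $(1-C\de)L\subset(L-\de u)/(1+\de)$ is equivalent to $(1-C\de)(1+\de)L+\de u\subset L$, and via the analogous identity $\{w:\mu L+w\subset L\}=(1-\mu)L$ for $\mu\in[0,1]$ this reduces to $u\in(C-1+C\de)L$. Since $u\in -L$, it suffices to take $C\ge 1+\lambda(L)$, where $\lambda(L):=\inf\{\lambda>0:-L\subset\lambda L\}<\infty$ thanks to $0\in{\rm int}(L)$. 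Setting $A=\Psi\circ A_0^{-1}\circ B_0$ completes the first part with $C=C(L)$.

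For the second part with $L=\De_n$, the construction above yields an affine $A$ with $(1-C\de)\De_n\subset A(K)\subset\De_n$ and an explicit constant depending only on $n$ (since $\lambda(\De_n)=n$ by the relation $-v_i/n\in\De_n$, one may take $C=n+1$). To additionally arrange $c_{F_i}:=-v_i/n\in A(K)$ for each facet $F_i$ of $\De_n$, I use a perturbation argument. For each $i$, the body $A(K)$ admits a supporting hyperplane in direction $-v_i$ touching $\partial A(K)$ at a point $p_i$ with $|p_i-c_{F_i}|=O(\de)$ (a consequence of the sandwich). The $n+1$ points $p_0,\ldots,p_n$ are affinely independent for $\de$ sufficiently small, so there is a unique affine map $\Xi$ sending $p_i\mapsto c_{F_i}$ for all $i$; moreover $\Xi$ is $O(\de)$-close to the identity. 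After a preliminary contraction of $A$ by a factor $1-C''\de$ to create slack, I replace $A$ by $\Xi\circ A$; the facet-centroid condition then holds by construction, and the sandwich survives with an enlarged constant $C'=C'(n)$.

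The main obstacle will be verifying that the perturbation in the second part really preserves $\Xi\circ A(K)\subset\De_n$ and yields an inner inclusion of the form $(1-C'\de)\De_n$ with $C'$ depending only on $n$. This requires quantifying how $\Xi$ distorts $\De_n$ and how much preliminary slack is needed; a cleaner alternative that I would also consider is to parameterize affine maps directly by prescribed images of $n+1$ well-chosen contact points of $K$ with its Banach--Mazur-optimal enclosing simplex, fix those images to be $\{c_{F_i}\}$ from the outset, and verify the resulting sandwich from the contact-point structure.
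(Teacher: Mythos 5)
Your argument for the first inclusion $(1-C\de)L\subset A(K)\subset L$ is essentially the paper's: you decompose the affine map, bound the translation part using $0\in\mathrm{int}(L)$, and use the comparison $-L\subset\lambda L$ (the paper writes this as $-L\subset cL$). The normalization is slightly different (you sandwich with an outer $(1+\de)$, the paper with an inner $(1-\de)$), and your support-function identity makes the computation a bit more explicit, but conceptually this is the same proof. No issue here.

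For the second part, however, you take a genuinely different route and it is not actually closed. You perturb by an affine map $\Xi$ that sends $n+1$ contact points $p_i\in\partial A(K)$ (in the facet-normal directions) to the facet centroids $c_{F_i}$, after first shrinking by $1-C''\de$ to ``create slack.'' You yourself flag the obstacle: you need the post-perturbation sandwich $(1-C'\de)\De_n\subset\Xi(A(K))\subset\De_n$, and the verification that the slack absorbs the distortion of $\Xi$. The trouble is that the two linear-in-$\de$ constants feed back into each other: the contact points of the shrunken body move by an amount proportional to $C''\de$, so the distortion constant of $\Xi$ grows with $C''$, and it is not automatic that one can choose $C''$ so that the needed inequality $(\text{distortion of }\Xi)\cdot\mathrm{diam}(\De_n)\le C''\de/n$ has a solution. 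This may or may not be repairable with more careful bookkeeping, but as written the argument does not close, and the ``cleaner alternative'' you mention at the end is only a sketch. The paper sidesteps all of this by invoking Klee's theorem \cite{Kl}: the simplex $S$ of \emph{minimal volume} containing $K_1=A^{-1}B(K)$ automatically has all its facet centroids in $K_1$. One then shows $S$ is $O(\de)$-close to $\De_n$ using a volume-comparison argument (Lemma~2 of \cite{NPRZ}), takes the affine $A_1$ with $S=A_1(\De_n)$, and applies the first part to $A_1^{-1}(K_1)$. That is the key idea you are missing; with it the facet-centroid condition comes for free rather than having to be engineered by a delicate perturbation.
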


\begin{proof}
By definition, there are affine isomorphisms
$A,B:\R^n\rightarrow\R^n$ such that
$$
(1-\de)A(L) \subset B(K) \subset A(L).
$$
Clearly $A(L)$ should contain the origin $0$. Put $a=A^{-1}(0)$.
Then it is in $L$ and we can write $A(x) = T(x-a)$, $(x\in\R^n)$ for
some linear transformation $T$ on $\R^n$. Note that, for every point
$x$,
\begin{eqnarray*}
A^{-1}\Big((1-\de)A(x)\Big) &=& A^{-1}\Big((1-\de)T(x-a)\Big) =
A^{-1}T\Big((1-\de)x+a\de-a\Big) \\
&=& A^{-1}A\Big((1-\de)x+a\de\Big) = (1-\de)x +a\de,
\end{eqnarray*}
which implies $A^{-1}\Big((1-\de)A(L)\Big) = (1-\de)L + a\de$. Take
a constant $c>1$ such that $-L\subset cL$. We have
\begin{eqnarray*}
\Big(1-(1+c)\de\Big)L -a\de &\subset& \Big(1-(1+c)\de\Big)L -\de L  \\
&\subset& \Big(1-(1+c)\de\Big)L + c\de L \quad=\quad (1-\de)L
\end{eqnarray*}
The above two facts imply
\begin{eqnarray*}
\Big(1-(1+c)\de\Big)L &\subset& A^{-1}\Big((1-\de)A(L)\Big)\\
&\subset& A^{-1}B(K) \quad\subset\quad L.
\end{eqnarray*}

For the case $L=\De_n$, note that $-\De_n \subset n\De_n$. Thus we
have
$$
(1-(n+1)\de)\De_n \subset A^{-1}B(K) \subset \De_n.
$$
Let $S$ be a simplex of minimal volume containing $K_1:=A^{-1}B(K)$.
It is proved in \cite{Kl} that all the centroids of facets of $S$
belong to $K_1$. On the other hand, if $x\in S\setminus
(1+\ka)\Big((1-(n+1)\de)\De_n\Big)$, then by Lemma 2 of \cite{NPRZ},
\begin{eqnarray*}
\vol{S} &\ge& \vol{\Big(1-(n+1)\de\Big)\De_n} +
\frac{\ka\big(1-(n+1)\de\big)^n\frac{1}{n}\(\frac{n^2}{n+1}\vol{\De_n}\)}{n}
\\
&=& \vol{\De_n}\Big(1-(n+1)\de\Big)^n\(1+\frac{\ka}{n+1}\).
\end{eqnarray*}
If $\ka > \ka_0 = (n+1)\[(1-(n+1)\de)^{-n}-1\]$, the existence of
such $x$ implies $\vol{S} > \vol{\De_n}$ which is a contradiction.
Hence, for $\ka > \ka_0$,
\begin{eqnarray*}
(1-\ka)S &\subset& (1-\ka)\[(1+\ka)\big(1-(n+1)\de\big)\De_n\]\\
&\subset& \big(1-(n+1)\de\big)\De_n \\
&\subset& K_1 \quad\subset\quad S.
\end{eqnarray*}
Note also that there exists a unique affine isomorphism $A_1$
satisfying $S=A_1(\De_n)$. Applying the argument of the first part
again, we have
$$
\big(1-(n+1)\ka\big)\De_n \subset A_1^{-1}(K_1) \subset \De_n.
$$
where $\ka > (n+1)\Big((1-(n+1)\de)^{-n}-1\Big) \approx n(n+1)^2\de$
if $\de>0$ is small enough.

\end{proof}

\begin{proof}[\bf Proof of Theorem~\ref{th:main}]
Let $K$ be a convex body with $d_{BM}(K,S)=1+\de$ for sufficiently
small $\de>0$. By Lemma \ref{lem:distance}, (replacing $S$ by
$\De_n$), there is a constant $C=C(n)>0$ such that
$$
(1-C\de)\De_n \subset A(K) \subset \De_n.
$$
and all the centroids of facets of $\De_n$ belong to $A(K)$. Since the
volume product is invariant under affine isomorphisms of $\R^n$,
we may assume that all the centroids of facets of $\De_n$
belong to $K$. We may also ``include'' the constant $C$ into $\de$ and
assume that
$$
\de := \min \{ d>0 :(1-d)\De_n \subset K \subset \De_n \}\,.
$$
This implies that $d_H(K,\De_n) \le \de$. From Proposition
\ref{prop:santalo_map} we now conclude that $|s(K)|\le c_1\de$ for
some $c_1>0$. By Proposition~\ref{prop:santalo_vol}, and the
remark following it, we get the inequality
\begin{eqnarray*}
\vol{K^\circ} &\le& \vol{K^{s(K)}}(1+ c_2|s(K)|^2) \\
&\le& \vol{K^{s(K)}}(1 + c_1c_2 \de^2)
\end{eqnarray*}
for some constant $c_2>0$ (a-priory, $c_2$ would depend on the
radius of a ball centered at $s(K)$ and contained in $K$. The remark
following Proposition~\ref{prop:santalo_vol} allows us to use instead
a ball centered at $0$. The relation $(1-\de)\De_n\subset K$ then
allows us to use a ball contained in, say,  $\frac{1}{2}\De_n$ instead.
Thus $c_2$ may be considered as independent of $K$). Hence
$$
\vol{K^{s(K)}}\geq \vol{K^\circ}-c_1c_2\vol{K^{s(K)}}\de^2\geq
\vol{K^\circ}(1-c_1c_2\de^2),
$$
and the volume product of $K$ satisfies
\begin{eqnarray*}
\VP(K) = \vol{K}\vol{K^{s(K)}} &\ge& \vol{K}\vol{K^\circ}(1- c_1c_2
\de^2).
\end{eqnarray*}
Proposition \ref{prop:construction} implies that
$\vol{K}\vol{K^\circ} \ge \vol{\De_n}\vol{\De_n^\circ} + c\de$.
Finally, we have
\begin{eqnarray*}
\VP(K) &\ge& \Big(|\De_n||\De_n^\circ| + c\de\Big)(1-c_1c_2\de^2) \\
&\ge& \VP(S) + C\de.
\end{eqnarray*}
for sufficiently small $\de>0$ and a constant $C>0$.
\end{proof}

\small{
\noindent J. Kim: Department of Mathematics, Kent State
University, Kent, OH 44242, USA.\newline Email:  jkim@math.kent.edu

\noindent
S. Reisner: Department of Mathematics, University of Haifa, Haifa 31905, Israel.\newline
Email: reisner@math.haifa.ac.il}

\end{document}